\documentclass[11pt]{article}
\usepackage[a4paper,margin=2.7cm]{geometry}
\usepackage{amsmath,amssymb,amsthm,mathtools}
\usepackage{graphicx}
\usepackage{enumitem}
\usepackage{microtype}
\usepackage{booktabs}
\usepackage{float}
\usepackage[pagebackref]{hyperref}
\usepackage{authblk}
\newtheorem{theorem}{Theorem}[section]

\newtheorem{lemma}[theorem]{Lemma}
\newtheorem{corollary}[theorem]{Corollary}
\newtheorem{remark}[theorem]{Remark}
\theoremstyle{definition}
\newtheorem{definition}[theorem]{Definition}

\newcommand{\Om}{\Omega}
\newcommand{\GD}{\Gamma_D}
\newcommand{\GN}{\Gamma_N}
\newcommand{\GR}{\Gamma_R}

\newcommand{\Tr}{\operatorname{Tr}}
\newcommand{\QT}{Q_T}

\title{Weak solvability of a nonlinear Boussinesq groundwater-flow model with mixed boundary conditions: the Moche--CHAVIMOCHIC aquifer}
\author[1]{Alexis Rodriguez Carranza}
\author[2]{V\'ictor Arturo Mart\'inez Le\'on}
\author[3]{Alan Ch\'avez Obreg\'on}
\affil[1]{Instituto de Investigaci\'on en Matem\'aticas, Departamento de Matem\'aticas, FCFYM, Universidad Nacional de Trujillo, Trujillo, Peru.\\
\texttt{arodriguezca@unitru.edu.pe}; ORCID: \href{https://orcid.org/0000-0002-0290-165X}{0000-0002-0290-165X}}
\affil[2]{Universidade Federal da Integra\c{c}\~ao Latino-Americana (UNILA), Instituto Latino-Americano de Ci\^encias da Vida e da Natureza (ILACVN), Foz do Igua\c{c}u, Paran\'a, Brazil.\\
\texttt{victor.leon@unila.edu.br}; ORCID: \href{https://orcid.org/0000-0002-2082-6665}{0000-0002-2082-6665}}
\affil[3]{OASIS and GRACOCC research groups, Instituto de Investigaci\'on en Matem\'aticas, Departamento de Matem\'aticas, FCFYM, Universidad Nacional de Trujillo, Trujillo, Peru.\\
\texttt{ajchavez@unitru.edu.pe}; ORCID: \href{https://orcid.org/0000-0001-5120-0705}{0000-0001-5120-0705}}
\date{}

\begin{document}
\maketitle

\begin{abstract}
We prove weak solvability for a two-dimensional nonlinear Boussinesq-type groundwater-flow problem motivated by the Moche--CHAVIMOCHIC aquifer in northern Peru. The model couples state-dependent storage and transmissivity with a boundary decomposition into Dirichlet, Neumann, and nonlinear Robin parts dictated by the hydrogeological geometry. In contrast with much of the groundwater Boussinesq literature, which is devoted to exact, similarity, perturbative, semi-analytical, or linearized solutions for special geometries and boundary data, our objective is a variational existence result for a genuinely two-dimensional mixed-boundary problem. Because the physical transmissivity is proportional to the saturated thickness, the equation degenerates at local depletion. We therefore isolate a physically admissible saturated range and construct uniformly positive extensions of the constitutive laws. For the resulting uniformly parabolic problem, existence of a weak solution is obtained through an implicit Euler--Rothe scheme, a nonlinear energy adapted to the storage law, uniform estimates for the discrete derivative of the storage potential, discrete compactness, and strong convergence of boundary traces. We then give a precise consistency statement showing when the abstract solution solves the original Moche model and identify the fully depleted regime as a distinct porous-medium-type problem.
\end{abstract}

\noindent\textbf{MSC 2020:} 35K55, 35D30, 76S05.\\
\textbf{Keywords:} groundwater flow; weak solution; nonlinear parabolic equation; mixed boundary conditions; implicit Euler method; Rothe method; compactness; Moche aquifer.

\section{Introduction}
Groundwater flow in unconfined aquifers is a classical source of nonlinear diffusion problems. Under Dupuit-type assumptions, the water-table dynamics are described by Boussinesq-type equations in which the transmissivity depends on the saturated thickness and may therefore vanish at local depletion \cite{Boussinesq,Bear,FreezeCherry}. This structure is analytically delicate even before realistic boundary interactions are introduced. In field-scale models, however, different pieces of the aquifer boundary generally represent different hydraulic mechanisms, so that one is naturally led to mixed boundary conditions rather than to a single homogeneous boundary law.

The Moche--CHAVIMOCHIC aquifer in the La Libertad region of Peru provides a concrete example of this situation. The lower Moche valley is predominantly alluvial and unconfined, and the hydrogeological description distinguishes an ocean interface, essentially impermeable lateral sectors, and a principal recharge connection with the middle basin; see \cite{INRENA}. These three mechanisms lead, respectively, to Dirichlet, Neumann, and Robin conditions. The resulting model is therefore not merely a standard Boussinesq equation posed on a rectangular test geometry: it is a two-dimensional nonlinear parabolic problem whose boundary decomposition is inherited from the physical aquifer.

The groundwater Boussinesq equation has generated a large literature, but a substantial part of it pursues explicit or reduced-form solutions under special geometries and boundary data. For example, Moutsopoulos studied the nonlinear Boussinesq equation with a nonlinear Robin condition describing a semipervious interface and derived a semi-analytical solution \cite{Moutsopoulos2013}. Exact solutions for horizontal and sloping aquifers with source and sink terms were obtained by Bartlett and Porporato \cite{BartlettPorporato2018}; semi-analytical and perturbation solutions for one-dimensional horizontal aquifers were developed in \cite{Hayek2019,Basha2021}; and more recent work continues this program through closed-form, approximate, and similarity constructions \cite{Hayek2024,AkylasGravanis2024,TogoUnami2026}. Thus, nonlinear Robin exchange and nonlinear Boussinesq dynamics are individually well represented in the hydrological literature, but typically in one-dimensional or specially reducible configurations and with the main emphasis on analytical or approximate solution formulas rather than on variational weak solvability.

Recent two-dimensional studies likewise emphasize analytical representations after linearization or geometric simplification. Wu and Hsieh obtained an analytical solution for two-dimensional flow in a sloping unconfined aquifer under spatiotemporal recharge using a linearized Boussinesq equation \cite{WuHsieh2024}, while Hsieh, Yu and Wu developed a two-dimensional analytical model in a rectangular anisotropic aquifer, again within a linearized framework \cite{HsiehYuWu2025}. These works are important for groundwater prediction and benchmarking, but their mathematical objective differs from the one pursued here: we retain the nonlinear constitutive dependence and seek an existence theorem in a Sobolev setting for a mixed Dirichlet--Neumann--Robin boundary decomposition.

There is, of course, a broad PDE theory for nonlinear parabolic and elliptic--parabolic equations based on monotone and pseudomonotone operators, beginning with classical frameworks such as Lions, Alt--Luckhaus, and Showalter \cite{Lions,AltLuckhaus,Showalter}. Robin and mixed boundary conditions have also been studied abstractly for parabolic equations; see, for instance, Nittka's weak-solution theory for inhomogeneous Robin problems and the mixed-boundary existence results of Kim and Cao \cite{Nittka2014,KimCao2018}. Our contribution is not a new abstract existence principle. Rather, it is to identify a rigorous variational formulation for the Moche groundwater model, verify the structural hypotheses generated by its storage and transmissivity laws, and combine these with the physically induced three-part boundary decomposition.

More precisely, the physical coefficients are
\[
S_{\rm ph}(x,r)=n_e+S_s(r-\eta(x)),\qquad
T_{\rm ph}(x,r)=K(r-\eta(x)),
\]
so the transmissivity loses ellipticity when $r=\eta(x)$. We separate two regimes. In the present paper we work on a physically admissible saturated interval on which $r-\eta(x)$ stays uniformly positive, extend the coefficients outside that interval to a uniformly parabolic problem, and prove existence of a weak solution for the extended mixed-boundary system. The fully depleted case is not hidden inside the theorem: it is explicitly identified as a genuinely degenerate porous-medium-type problem requiring a different analysis.

The analytical contribution is fourfold. First, we formulate the Dirichlet--Neumann--Robin problem in a Sobolev framework that accommodates a nonlinear monotone exchange law on the Robin portion. Second, we introduce the storage potential
\[
B(x,r)=\int_0^r S(x,s)\,ds
\]
and the associated energy, which are the natural objects for the nonlinear time derivative. Third, we construct solutions by an implicit Euler--Rothe scheme and obtain uniform estimates for the discrete derivative of $B(\cdot,h)$. Fourth, a discrete compactness argument together with strong compactness of traces allows passage to the nonlinear diffusion and Robin terms. The main theorem is therefore an existence theorem; uniqueness is deliberately not asserted without additional monotonicity assumptions.

The novelty should accordingly be understood as the combination of the following features within one rigorous weak-solvability analysis: a genuinely two-dimensional nonlinear groundwater model, state-dependent storage and transmissivity, a physically derived Dirichlet--Neumann--Robin partition of the boundary, and a nonlinear Robin exchange law. The most directly related groundwater studies cited above address important pieces of this picture, but with different goals---chiefly explicit, approximate, similarity, numerical, or linearized solutions. Our result complements that literature by providing the variational existence side of the model.

The paper is organized as follows. Section~\ref{sec:model} presents the Moche groundwater model and the saturated-regime reduction. Section~\ref{sec:functional} introduces the functional framework and the nonlinear Robin law. Section~\ref{sec:existence} proves the existence theorem through time discretization and compactness. Section~\ref{sec:physical} states precisely how the abstract result applies to the physical coefficients. Section~\ref{sec:degenerate} discusses the depleted, degenerate regime and the analytical issues that remain beyond the uniformly saturated setting.

\section{The Moche groundwater model}\label{sec:model}
Let $\Om\subset\mathbb R^2$ be a bounded Lipschitz domain representing the lower Moche aquifer. We assume that
\[
\partial\Om=\overline{\GD}\cup\overline{\GN}\cup\overline{\GR},
\]
where $\GD,\GN,\GR$ are pairwise disjoint relatively open sets and $|\GD|>0$. The sets represent, respectively, the ocean boundary, the essentially impermeable lateral boundary, and the recharge boundary connecting the lower and middle basins. The hydrogeological configuration motivating this decomposition is illustrated in Figure~\ref{fig:moche-boundary}. The Pacific Ocean provides the reference-head boundary, the lateral sectors adjacent to rock formations and desert areas motivate prescribed-flux conditions, and the inland recharge sector associated with the Moche--CHAVIMOCHIC system motivates the exchange condition.

\begin{figure}[H]
\centering
\includegraphics[width=0.86\textwidth]{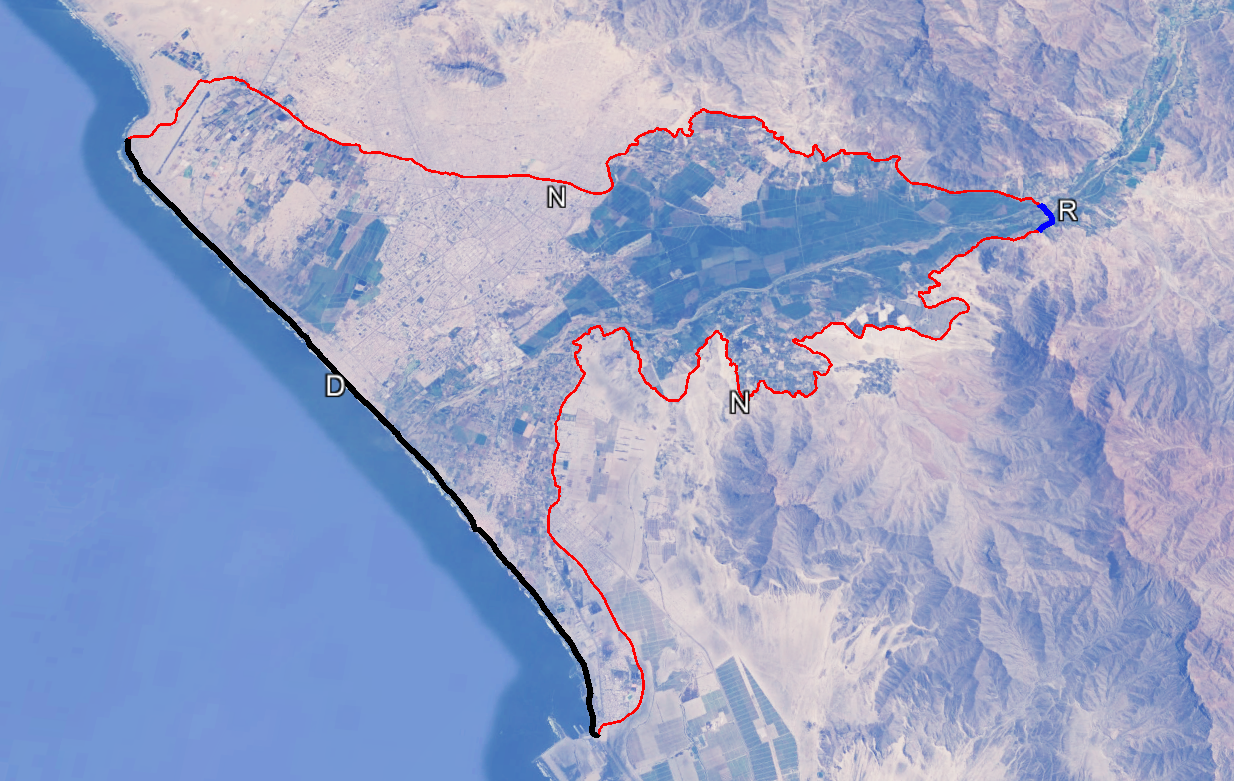}
\caption{Hydrogeological configuration of the lower Moche--CHAVIMOCHIC aquifer motivating the boundary decomposition into ocean-interface ($\Gamma_D$), prescribed-flux ($\Gamma_N$), and recharge/exchange ($\Gamma_R$) portions. The labels D, N, and R indicate these three boundary types.}
\label{fig:moche-boundary}
\end{figure}

Accordingly, the mathematical boundary partition used below is the idealized representation of the three hydraulic mechanisms displayed in Figure~\ref{fig:moche-boundary}; the figure is intended to motivate the model geometry rather than to assert an exact geometric reconstruction of the aquifer boundary.

The physical model is
\begin{equation}\label{eq:physical}
\frac{\partial}{\partial x}\left((h-\eta)h_x\right)
+\frac{\partial}{\partial y}\left((h-\eta)h_y\right)
+\frac{N}{K}
=
\frac{n_e}{K}h_t+\frac{S_s}{K}(h-\eta)h_t,
\end{equation}
where $h=h(x,t)$ is the piezometric head, $K>0$ is the hydraulic conductivity, $n_e>0$ is the effective porosity, $S_s\ge0$ is the specific storage coefficient, $N$ is a source/recharge term, and $\eta=\eta(x)$ is the lower reference level entering the saturated thickness. We assume
\begin{equation}\label{eq:eta}
\eta\in W^{1,\infty}(\Om).
\end{equation}
Equivalently,
\begin{equation}\label{eq:physical2}
\bigl(n_e+S_s(h-\eta)\bigr)h_t
-K\,\operatorname{div}\bigl((h-\eta)\nabla h\bigr)=N.
\end{equation}
Thus the physical constitutive functions are
\begin{equation}\label{eq:physicalST}
S_{\rm ph}(x,r)=n_e+S_s(r-\eta(x)),
\qquad
T_{\rm ph}(x,r)=K(r-\eta(x)).
\end{equation}

The boundary conditions are
\begin{align}
h&=0 &&\text{on }\GD\times(0,T),\label{eq:Dir}\tag{D}\\
-T_{\rm ph}(x,h)\,\partial_n h&=g &&\text{on }\GN\times(0,T),\label{eq:Neu}\tag{N}\\
-T_{\rm ph}(x,h)\,\partial_n h&=\alpha(h)(h-h_{\rm ext}) &&\text{on }\GR\times(0,T),\label{eq:Rob}\tag{R}
\end{align}
and
\begin{equation}\label{eq:init}
h(\cdot,0)=h_0\quad\text{in }\Om.
\end{equation}
The Dirichlet condition corresponds to the choice of sea level as reference head. The Neumann condition models a prescribed flux on lateral sectors, while the Robin law describes hydraulic exchange through the principal recharge zone.

\subsection{The saturated-regime reduction}
The physical transmissivity vanishes when $h=\eta$. Hence \eqref{eq:physical2} is not uniformly parabolic through complete local depletion. We therefore distinguish the saturated regime from the depleted regime.

Fix a physically admissible interval $I=[m,M]$ and assume that
\begin{equation}\label{eq:admissible}
r-\eta(x)\ge\delta>0
\qquad\text{for a.e. }x\in\Om\text{ and every }r\in I.
\end{equation}
On $I$ the physical coefficients satisfy
\[
S_{\rm ph}(x,r)\ge n_e>0,
\qquad
T_{\rm ph}(x,r)\ge K\delta>0.
\]
Choose extensions $S,T:\Om\times\mathbb R\to\mathbb R$ which coincide with $S_{\rm ph},T_{\rm ph}$ on $I$ and satisfy the hypotheses stated below. The existence theorem is proved for the extended uniformly parabolic problem. If the resulting solution takes values in $I$, then it is a weak solution of the physical Moche model itself; see Corollary~\ref{cor:physical}. Since the ocean condition prescribes $h=0$ on $\GD$, a physically invariant interval intended to contain the full solution should in particular be compatible with this boundary value (typically $0\in I$).

\section{Functional setting and weak formulation}\label{sec:functional}
Set
\[
V:=\{v\in H^1(\Om):\Tr v=0\text{ on }\GD\}.
\]
By the Poincar\'e inequality, $\|v\|_V:=\|\nabla v\|_{L^2(\Om)}$ is an equivalent norm on $V$, and
\[
V\hookrightarrow L^2(\Om)\hookrightarrow V'
\]
is a Gelfand triple.

We impose the following structural hypotheses on the extended coefficients.

\medskip
\noindent\textbf{(H1) Storage and transmissivity.}
The functions $S,T$ are Carath\'eodory functions and there exist constants $S_0,S_1,T_0,T_1>0$ such that
\begin{equation}\label{eq:coeff}
0<S_0\le S(x,r)\le S_1,
\qquad
0<T_0\le T(x,r)\le T_1
\end{equation}
for a.e. $x\in\Om$ and all $r\in\mathbb R$.

Define
\begin{equation}\label{eq:B}
B(x,r):=\int_0^r S(x,s)\,ds.
\end{equation}
Then
\begin{equation}\label{eq:BbiLip}
S_0|r-s|\le |B(x,r)-B(x,s)|\le S_1|r-s|.
\end{equation}
We additionally assume the following Sobolev chain-rule property. For every $u\in H^1(\Om)$,
$B(\cdot,u)\in H^1(\Om)$ and
\begin{equation}\label{eq:Bchain}
\nabla[B(x,u(x))]
=S(x,u(x))\nabla u(x)+\nabla_xB(x,u(x))
\quad\text{a.e. in }\Om,
\end{equation}
where, for some $C_B>0$,
\begin{equation}\label{eq:Bx}
|\nabla_x B(x,r)|\le C_B(1+|r|)
\qquad\text{for a.e. }x\in\Om,\ r\in\mathbb R.
\end{equation}
This assumption is, for example, satisfied if $B$ has the usual local Sobolev regularity in $x$, is $C^1$ in $r$, and $\partial_rB=S$ with the bounds above. For the physical storage law in \eqref{eq:physicalST}, it follows directly from \eqref{eq:eta}, since
\[
B_{\rm ph}(x,r)=n_e r+\frac{S_s}{2}r^2-S_s\eta(x)r,
\qquad
\nabla_xB_{\rm ph}(x,r)=-S_s r\nabla\eta(x).
\]
The extensions may be chosen so that \eqref{eq:Bchain}--\eqref{eq:Bx} are preserved.

\medskip
\noindent\textbf{(H2) Nonlinear Robin exchange.}
Let $\Sigma_R:=\GR\times(0,T)$. We write the Robin law abstractly as
\[
-T(x,h)\partial_nh=\beta(x,t,h)\qquad\text{on }\Sigma_R.
\]
The function $\beta:\Sigma_R\times\mathbb R\to\mathbb R$ is Carath\'eodory, nondecreasing in its last variable,
\begin{equation}\label{eq:betamono}
(\beta(x,t,r)-\beta(x,t,s))(r-s)\ge0,
\end{equation}
and has at most linear growth:
\begin{equation}\label{eq:betagrowth}
|\beta(x,t,r)|\le b_0(x,t)+b_1|r|,
\qquad b_0\in L^2(\Sigma_R),\quad b_1\ge0.
\end{equation}
The physical law
\[
\beta(x,t,r)=\alpha(r)(r-h_{\rm ext}(x,t))
\]
is covered whenever it satisfies \eqref{eq:betamono} and \eqref{eq:betagrowth}. In particular, this holds for a positive constant exchange coefficient $\alpha$ with $h_{\rm ext}\in L^2(\Sigma_R)$.

\medskip
\noindent\textbf{(H3) Data.}
We assume
\begin{equation}\label{eq:data}
h_0\in L^2(\Om),\qquad
q\in L^2(0,T;V'),\qquad
g\in L^2(0,T;H^{-1/2}(\GN)).
\end{equation}

\begin{definition}[Weak solution]\label{def:weak}
A function
\[
h\in L^\infty(0,T;L^2(\Om))\cap L^2(0,T;V)
\]
is a weak solution of the extended problem if
\[
B(\cdot,h)\in L^2(0,T;H^1(\Om)),
\qquad
\partial_tB(\cdot,h)\in L^2(0,T;V'),
\]
and, for every $v\in V$ and every $\psi\in C^1([0,T])$ with $\psi(T)=0$,
\begin{align}
-\int_0^T (B(\cdot,h),v)_{L^2}\psi'(t)\,dt
&+\int_0^T\!\int_\Om T(x,h)\nabla h\cdot\nabla v\,\psi\,dxdt\notag\\
&+\int_0^T\!\int_{\GR}\beta(x,t,h)\,v\,\psi\,dSdt\notag\\
&=\int_0^T\bigl(\langle q,v\rangle_{V',V}
-\langle g,\Tr v\rangle_{H^{-1/2},H^{1/2}}\bigr)\psi\,dt\notag\\
&\quad +(B(\cdot,h_0),v)_{L^2}\psi(0).
\label{eq:weakint}
\end{align}
\end{definition}

\section{Existence by implicit Euler--Rothe approximation}\label{sec:existence}
\begin{theorem}[Existence]\label{thm:existence}
Assume \emph{(H1)--(H3)}. Then the extended mixed-boundary problem admits at least one weak solution in the sense of Definition~\ref{def:weak}.
\end{theorem}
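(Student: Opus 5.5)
We discretize time by the implicit Euler--Rothe scheme. Fix $N\in\mathbb N$, $\tau=T/N$, $t_k=k\tau$, and set $h^0:=h_0$. Time-average the data: $q^k:=\tau^{-1}\int_{t_{k-1}}^{t_k}q\,dt\in V'$, $g^k$ defined analogously, and $\beta^k(x,r):=\tau^{-1}\int_{t_{k-1}}^{t_k}\beta(x,t,r)\,dt$. The averaged $\beta^k$ inherits \eqref{eq:betamono} and \eqref{eq:betagrowth}.

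\textbf{Step 1: solvability of the stationary problems.} Given $h^{k-1}\in L^2(\Om)$, we look for $h^k\in V$ such that, for all $v\in V$,
\[
\Bigl(\tfrac{B(\cdot,h^k)-B(\cdot,h^{k-1})}{\tau},v\Bigr)+\int_\Om T(x,h^k)\nabla h^k\cdot\nabla v+\int_{\GR}\beta^k(x,h^k)v
=\langle q^k,v\rangle-\langle g^k,\Tr v\rangle .
\]
The operator $A:V\to V'$ defined by the left-hand side is pseudomonotone. The $B$-term is continuous and monotone on $L^2$, since $r\mapsto B(x,r)$ is increasing by \eqref{eq:BbiLip}, and it is compact as a map from $V$ into $V'$. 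The diffusion term is of Leray--Lions type: the coefficient $T(x,h)$ is bounded and the gradient enters linearly, and Rellich compactness makes $T(x,h)$ converge strongly. The Robin term is monotone and continuous via the compact trace $V\hookrightarrow L^2(\GR)$. For coercivity, test with $v=h^k$. We have $(B(h)-B(h^{k-1}),h)\ge -C\|h^{k-1}\|^2-C\|h\|\,\|h^{k-1}\|$. The gradient term gives $T_0\|\nabla h\|^2$. For the Robin term, $\beta^k(h)h\ge (\beta^k(h)-\beta^k(0))h-|b_0|\,|h|$. By Brezis' theorem on pseudomonotone coercive operators we obtain $h^k$.

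\textbf{Step 2: a priori estimates.} We test with the nonlinear energy. Set $\Phi(x,r):=\int_0^r (B(x,r)-B(x,s))\,ds$, which is comparable to $r^2$ by \eqref{eq:BbiLip}, and test with $v=h^k$. Convexity gives
\[
(B(h^k)-B(h^{k-1}),h^k)\ge\int_\Om\Phi(x,h^k)-\Phi(x,h^{k-1}).
\]
This holds because $\Phi(r)-\Phi(s)=rB(r)-sB(s)-\int_s^rB$, and in this form it is convex and valid without $x$-derivatives. Summing over $k$ and applying discrete Gronwall yields
\[
\max_k\|h^k\|_{L^2}^2+\tau\sum_k\|\nabla h^k\|^2\le C,
\qquad \tau\sum_k\|\beta^k(h^k)\|_{L^2(\GR)}^2\le C .
\]
From the discrete equation, the quotient $\delta_\tau B^k:=(B(h^k)-B(h^{k-1}))/\tau$ is then bounded in $\ell^2(V')$, i.e.\ $\tau\sum\|\delta_\tau B^k\|_{V'}^2\le C$. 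By \eqref{eq:Bchain}--\eqref{eq:Bx}, $B(\cdot,h^k)$ is bounded in $H^1$ in the $\ell^2$ sense.

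\textbf{Step 3: passage to the limit (main obstacle).} Let $\hat B_\tau$ denote the piecewise-linear interpolant of $B(\cdot,h^k)$, and $\bar h_\tau$, $\bar B_\tau$ the piecewise-constant interpolants. Then $\hat B_\tau$ is bounded in $H^1(0,T;V')\cap L^2(0,T;H^1)$. The discrete Aubin--Lions lemma (Dreher--J\"ungel), or an Alt--Luckhaus type time-translate estimate, gives $\bar B_\tau\to \mathcal B$ strongly in $L^2(Q_T)$, and $\|\hat B_\tau-\bar B_\tau\|_{L^2(V')}\le C\tau^{1/2}$. The key point is to transfer compactness from $B$ to $h$. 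Since $r\mapsto B(x,r)$ is bi-Lipschitz uniformly in $x$, its inverse $B^{-1}(x,\cdot)$ is Lipschitz, so $\bar h_\tau=B^{-1}(x,\bar B_\tau)\to h:=B^{-1}(x,\mathcal B)$ strongly in $L^2(Q_T)$, and $\mathcal B=B(\cdot,h)$.

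For traces, interpolation gives $\|v\|_{L^2(\partial\Om)}\le \varepsilon\|v\|_{H^1}+C_\varepsilon\|v\|_{L^2}$. Combined with the $L^2(H^1)$ bound, this yields $\Tr\bar h_\tau\to\Tr h$ in $L^2(\Sigma_R)$. Continuity of the Nemytskii operator with linear growth then gives $\beta(\bar h_\tau)\to\beta(h)$. Further, $T(x,\bar h_\tau)\nabla\bar h_\tau\rightharpoonup T(x,h)\nabla h$ weakly, because $T(\bar h_\tau)\to T(h)$ a.e.\ and boundedly while $\nabla\bar h_\tau\rightharpoonup\nabla h$. The averaged data converge strongly. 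The Dirichlet trace is preserved under weak limits in $L^2(V)$.

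Finally we test the scheme with $v\psi(t_k)$ and perform discrete summation by parts, which produces the term $(B(h_0),v)\psi(0)$. Passing to the limit yields \eqref{eq:weakint}. The regularity $\partial_tB\in L^2(V')$ follows from the weak limit of $\partial_t\hat B_\tau$, and $h\in L^\infty(L^2)$ follows by weak-$*$ lower semicontinuity.
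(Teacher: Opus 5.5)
Your proposal is correct and follows essentially the paper's route: Rothe scheme, pseudomonotone elliptic steps, and the same energy (your $\int_0^r(B(x,r)-B(x,s))\,ds$ equals the paper's $\int_0^r sS(x,s)\,ds$ by integration by parts). It then uses $V'$ and $H^1$ bounds on $B(\cdot,h^k)$, discrete compactness of $\bar B_\tau$ (cited from Dreher--J\"ungel rather than proved via translates, Ehrling and Simon), bi-Lipschitz transfer to $h$, and trace interpolation; moreover, testing with $v\psi(t_k)$ makes $\tau\beta^k(x,h^k)$ integrate back to $\int_{t_{k-1}}^{t_k}\beta(x,t,h^k)\,dt$, so your direct appeal to Nemytskii continuity of $\beta$ is justified without the paper's averaging-operator step. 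One slip: $\hat B_\tau$ is not bounded in $L^2(0,T;H^1(\Omega))$ when only $h_0\in L^2(\Omega)$, since it contains $B(\cdot,h_0)$ on $(0,\tau)$, but this is harmless because you only invoke compactness for the piecewise constant $\bar B_\tau$.
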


\subsection{Time discretization}
Let $N\in\mathbb N$, $\tau=T/N$, and $t_k=k\tau$. Put $I_k=(t_{k-1},t_k]$. Let
\[
q^k:=\frac1\tau\int_{I_k}q(t)\,dt,
\qquad
g^k:=\frac1\tau\int_{I_k}g(t)\,dt,
\]
and define
\[
\beta^k(x,r):=\frac1\tau\int_{I_k}\beta(x,t,r)\,dt.
\]
Then $\beta^k$ is nondecreasing in $r$ and satisfies a growth estimate of the form \eqref{eq:betagrowth}, with the time-average of $b_0$ in place of $b_0$.

Set $h^0=h_0$. For $k=1,\dots,N$ we seek $h^k\in V$ such that
\begin{align}
\left(\frac{B(\cdot,h^k)-B(\cdot,h^{k-1})}{\tau},v\right)_{L^2}
&+\int_\Om T(x,h^k)\nabla h^k\cdot\nabla v\,dx\notag\\
&+\int_{\GR}\beta^k(x,h^k)v\,dS\notag\\
&=\langle q^k,v\rangle-\langle g^k,\Tr v\rangle
\label{eq:discrete}
\end{align}
for every $v\in V$.

\begin{lemma}[Solvability of each elliptic step]\label{lem:step}
For every $k=1,\dots,N$, problem \eqref{eq:discrete} has at least one solution $h^k\in V$.
\end{lemma}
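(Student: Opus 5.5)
The plan is to treat one step as a stationary problem for $u=h^k\in V$, with $h^{k-1}\in L^2(\Om)$ given. We apply the theory of pseudomonotone operators (Brezis / Leray--Lions) to the operator $A:V\to V'$ defined by
\[
\langle A u,v\rangle:=\frac1\tau\bigl(B(\cdot,u),v\bigr)_{L^2}+\int_\Om T(x,u)\nabla u\cdot\nabla v\,dx+\int_{\GR}\beta^k(x,\Tr u)\Tr v\,dS .
\]
The right-hand side is the functional
\[
F(v):=\langle q^k,v\rangle-\langle g^k,\Tr v\rangle+\tau^{-1}(B(\cdot,h^{k-1}),v)_{L^2},
\]
which lies in $V'$ by (H3), \eqref{eq:BbiLip} (since $|B(x,r)|\le S_1|r|$), and continuity of $\Tr:V\to H^{1/2}(\partial\Om)$. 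Solving \eqref{eq:discrete} is then equivalent to solving $Au=F$ in $V'$.

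First, $A$ is well defined and bounded on bounded sets. For the storage term this follows from $|B(x,r)|\le S_1|r|$. For the diffusion term it follows from $T\le T_1$. For the Robin term we use the growth bound for $\beta^k$ (inherited from \eqref{eq:betagrowth} with $\bar b_0^k\in L^2(\GR)$) together with the trace embedding $V\hookrightarrow L^2(\GR)$.

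Second, $A$ is coercive. We have $B(x,r)r\ge S_0r^2$, since $B(x,0)=0$ and \eqref{eq:BbiLip} holds with monotonicity. We also have $T(x,u)|\nabla u|^2\ge T_0\|u\|_V^2$. For the Robin term, monotonicity gives
\[
\beta^k(x,r)r\ge\beta^k(x,0)r\ge-\bar b_0^k|r|,
\]
so that
\[
\langle Au,u\rangle\ge T_0\|u\|_V^2-\|\bar b_0^k\|_{L^2(\GR)}C\|u\|_V,
\]
and $\langle Au,u\rangle/\|u\|_V\to\infty$.

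Third, $A$ is pseudomonotone, which I expect to be the main point. In fact I would show the stronger property that $A$ is of Leray--Lions type. Since $\Om$ is bounded and Lipschitz, $V\hookrightarrow L^2(\Om)$ and $V\to L^2(\GR)$ (trace) are compact. Let $u_n\rightharpoonup u$ in $V$, so that $u_n\to u$ strongly in $L^2(\Om)$ and $\Tr u_n\to\Tr u$ in $L^2(\GR)$, and a.e. along subsequences. The storage and Robin terms are then completely continuous. This uses the Carath\'eodory property, the linear growth bounds, and Vitali / Krasnosel'skii continuity of Nemytskii operators $L^2\to L^2$.

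For the diffusion term, suppose $\limsup\langle Au_n,u_n-u\rangle\le0$. The lower-order parts tend to $0$, so
\[
\limsup\int T(x,u_n)\nabla u_n\cdot\nabla(u_n-u)\le0 .
\]
We write this integrand as $T(x,u_n)|\nabla(u_n-u)|^2+T(x,u_n)\nabla u\cdot\nabla(u_n-u)$. In the second piece, $T(x,u_n)\nabla u\to T(x,u)\nabla u$ strongly in $L^2$ by dominated convergence, since $T$ is bounded and $T(x,u_n)\to T(x,u)$ a.e. It is paired against a weakly null sequence, so it tends to $0$. Hence $T_0\|\nabla(u_n-u)\|^2\to0$, i.e.\ $u_n\to u$ strongly in $V$.

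With strong convergence in hand, $\liminf\langle Au_n,u_n-v\rangle\ge\langle Au,u-v\rangle$ and $Au_n\rightharpoonup Au$ follow. For the diffusion flux, $T(x,u_n)\nabla u_n\to T(x,u)\nabla u$ in $L^2$. A subsequence argument, with uniqueness of the limit, removes the need to pass to subsequences. $A$ is also continuous on finite-dimensional subspaces by the same Nemytskii arguments.

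By the Brezis theorem on bounded, coercive, pseudomonotone operators on the reflexive separable space $V$, $A$ is surjective. Hence there is $h^k\in V$ with $Ah^k=F$, which is exactly \eqref{eq:discrete}. Uniqueness is not needed and is not claimed. Alternatively, one could run a Galerkin scheme with Brouwer's fixed point (acute angle lemma) and pass to the limit using the same Minty-free strong convergence argument. The delicate step in either route is the strong $V$-convergence of the diffusion term, and it rests on the uniform lower bound $T\ge T_0$ from (H1).
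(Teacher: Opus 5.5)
Your proof is correct and follows the paper's argument. You use the same operator (the paper's $\mathcal A_k=\frac1\tau\mathcal B+A+R_k$), the same coercivity bound from $B(x,r)r\ge S_0r^2$ and $\beta^k(x,r)r\ge\beta^k(x,0)r$, the same $(S_+)$ argument for the diffusion part via $T\ge T_0$ and dominated convergence of $T(x,u_n)\nabla u$, and the same pseudomonotone surjectivity theorem. The only variation is that you remove the storage and Robin terms from the $\limsup$ condition by complete continuity (compactness of $V\hookrightarrow L^2(\Om)$ and of the trace into $L^2(\GR)$), whereas the paper drops them using their monotonicity; both work here, and your explicit subsequence argument, which gives strong convergence of the whole sequence, is slightly more careful than the paper's.
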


\begin{proof}
For fixed $k$, define $\mathcal A_k:V\to V'$ by
\begin{align*}
\langle\mathcal A_k(u),v\rangle
&:=\frac1\tau(B(\cdot,u),v)_{L^2}
+\int_\Om T(x,u)\nabla u\cdot\nabla v\,dx
+\int_{\GR}\beta^k(x,u)v\,dS.
\end{align*}
We decompose
\[
\mathcal A_k=\frac1\tau\mathcal B+A+R_k,
\]
where
\[
\langle\mathcal B(u),v\rangle=(B(\cdot,u),v)_{L^2},\qquad
\langle A(u),v\rangle=\int_\Om T(x,u)\nabla u\cdot\nabla v\,dx,
\]
and
\[
\langle R_k(u),v\rangle=\int_{\GR}\beta^k(x,u)v\,dS.
\]

We first record the properties of the zero-order term. By \eqref{eq:BbiLip},
\begin{equation}\label{eq:B-Lipschitz-L2}
\|B(\cdot,u)-B(\cdot,v)\|_{L^2(\Om)}
\le S_1\|u-v\|_{L^2(\Om)},
\end{equation}
so $\mathcal B:V\to V'$ is bounded and continuous. Moreover,
\begin{equation}\label{eq:B-strongmono}
\langle\mathcal B(u)-\mathcal B(v),u-v\rangle
=\int_\Om [B(x,u)-B(x,v)](u-v)\,dx
\ge S_0\|u-v\|_{L^2(\Om)}^2.
\end{equation}

Next we verify the principal part. We claim that $A:V\to V'$ is of type $(S_+)$. Let
$u_n\rightharpoonup u$ in $V$ and suppose that
\[
\limsup_{n\to\infty}\langle A(u_n),u_n-u\rangle\le0.
\]
Since $V\Subset L^2(\Om)$, after passage to a subsequence,
$u_n\to u$ strongly in $L^2(\Om)$ and almost everywhere in $\Om$. Hence, by the
Carath\'eodory property and \eqref{eq:coeff},
\[
T(x,u_n)\to T(x,u)\quad\text{a.e. in }\Om,
\qquad |T(x,u_n)|\le T_1.
\]
Using $T\ge T_0$, we obtain
\begin{align*}
T_0\|\nabla(u_n-u)\|_{L^2(\Om)}^2
&\le \int_\Om T(x,u_n)|\nabla(u_n-u)|^2\,dx\\
&=\langle A(u_n),u_n-u\rangle
-\int_\Om T(x,u_n)\nabla u\cdot\nabla(u_n-u)\,dx.
\end{align*}
The last integral tends to zero. Indeed,
\[
T(x,u_n)\nabla u
=[T(x,u_n)-T(x,u)]\nabla u+T(x,u)\nabla u,
\]
where the first term converges strongly to zero in $L^2(\Om)^2$ by dominated
convergence, while the second is fixed in $L^2(\Om)^2$ and
$\nabla(u_n-u)\rightharpoonup0$ in $L^2(\Om)^2$. Consequently,
\[
\limsup_{n\to\infty}T_0\|\nabla(u_n-u)\|_{L^2(\Om)}^2\le0,
\]
and therefore $u_n\to u$ strongly in $V$. Thus $A$ is of type $(S_+)$ and, in
particular, pseudomonotone. Notice also that if $u_n\to u$ strongly in $V$, then
$A(u_n)\to A(u)$ strongly in $V'$; this follows from the same dominated-convergence
argument together with $\nabla u_n\to\nabla u$ in $L^2(\Om)^2$.

For the Robin term, the trace theorem and \eqref{eq:betagrowth} give
\begin{align*}
|\langle R_k(u),v\rangle|
&\le \|\beta^k(\cdot,u)\|_{L^2(\GR)}\|\Tr v\|_{L^2(\GR)}\\
&\le C\bigl(\|b_0^k\|_{L^2(\GR)}+\|u\|_V\bigr)\|v\|_V,
\end{align*}
so $R_k:V\to V'$ is bounded. Furthermore, \eqref{eq:betamono} implies
\begin{equation}\label{eq:R-monotone}
\langle R_k(u)-R_k(v),u-v\rangle
=\int_{\GR}[\beta^k(x,u)-\beta^k(x,v)](u-v)\,dS\ge0.
\end{equation}
The Carath\'eodory property and the linear-growth bound imply continuity of the
associated Nemytskii map on $L^2(\GR)$. Hence $R_k$ is hemicontinuous (indeed,
continuous after composition with the trace map). Thus $R_k$ is monotone and
hemicontinuous, and therefore pseudomonotone. We now verify directly that the full
operator $\mathcal A_k$ is of type $(S_+)$. Let $u_n\rightharpoonup u$ in $V$ and assume
\[
\limsup_{n\to\infty}\langle\mathcal A_k(u_n),u_n-u\rangle\le0.
\]
Since $\mathcal B$ and $R_k$ are monotone,
\[
\langle\mathcal B(u_n)-\mathcal B(u),u_n-u\rangle\ge0,
\qquad
\langle R_k(u_n)-R_k(u),u_n-u\rangle\ge0.
\]
Moreover, $\mathcal B(u),R_k(u)\in V'$ and $u_n-u\rightharpoonup0$ in $V$; hence
\[
\langle\mathcal B(u),u_n-u\rangle\to0,
\qquad
\langle R_k(u),u_n-u\rangle\to0.
\]
It follows that
\[
\limsup_{n\to\infty}\langle A(u_n),u_n-u\rangle\le0.
\]
The $(S_+)$ property of $A$ proved above therefore gives $u_n\to u$ strongly in $V$.
The continuity of $\mathcal B$, $A$, and $R_k$ then yields
$\mathcal A_k(u_n)\to\mathcal A_k(u)$ in $V'$. Thus $\mathcal A_k$ is of type
$(S_+)$ and, in particular, pseudomonotone \cite{Lions,Showalter}.

It remains to verify coercivity. Since $B(x,0)=0$, \eqref{eq:B-strongmono} gives
\[
B(x,r)r\ge S_0r^2.
\]
Moreover, by monotonicity of $\beta^k$,
\[
\beta^k(x,r)r\ge \beta^k(x,0)r.
\]
Therefore, using the trace theorem and Young's inequality,
\begin{align*}
\langle\mathcal A_k(u),u\rangle
&\ge \frac{S_0}{\tau}\|u\|_{L^2(\Om)}^2
+T_0\|u\|_V^2
+\int_{\GR}\beta^k(x,0)u\,dS\\
&\ge \frac{S_0}{2\tau}\|u\|_{L^2(\Om)}^2
+\frac{T_0}{2}\|u\|_V^2-C_k,
\end{align*}
where $C_k$ depends on $\|\beta^k(\cdot,0)\|_{L^2(\GR)}$. Thus $\mathcal A_k$ is
bounded, coercive, and pseudomonotone. The pseudomonotone surjectivity theorem
\cite{Lions,Showalter} yields a solution of
\[
\mathcal A_k(h^k)=\frac1\tau B(\cdot,h^{k-1})+q^k-g^k,
\]
where $g^k$ is understood through the trace duality. This is precisely
\eqref{eq:discrete}.
\end{proof}

\subsection{Discrete energy estimate}
Define
\begin{equation}\label{eq:Phi}
\Phi(x,r):=\int_0^r sS(x,s)\,ds.
\end{equation}
Then
\begin{equation}\label{eq:Phibounds}
\frac{S_0}{2}r^2\le \Phi(x,r)\le \frac{S_1}{2}r^2.
\end{equation}
Moreover, for every $a,b\in\mathbb R$,
\begin{equation}\label{eq:discretechain}
(B(x,a)-B(x,b))a\ge \Phi(x,a)-\Phi(x,b).
\end{equation}
Indeed,
\[
(B(x,a)-B(x,b))a-[\Phi(x,a)-\Phi(x,b)]
=\int_b^a(a-s)S(x,s)\,ds\ge0.
\]

Choose $v=h^k$ in \eqref{eq:discrete}. By \eqref{eq:discretechain} and $T\ge T_0$,
\begin{align}
\frac1\tau\int_\Om\bigl[\Phi(x,h^k)-\Phi(x,h^{k-1})\bigr]dx
+T_0\|h^k\|_V^2
+\int_{\GR}\beta^k(x,h^k)h^k\,dS
\le \langle q^k,h^k\rangle-\langle g^k,\Tr h^k\rangle.
\label{eq:discenergy0}
\end{align}
By monotonicity,
\[
\beta^k(x,h^k)h^k\ge \beta^k(x,0)h^k.
\]
The trace theorem and Young's inequality therefore imply
\[
\left|\int_{\GR}\beta^k(x,0)h^k\,dS\right|
\le \varepsilon\|h^k\|_V^2+C_\varepsilon\|\beta^k(\cdot,0)\|_{L^2(\GR)}^2.
\]
Similarly,
\[
|\langle q^k,h^k\rangle|+|\langle g^k,\Tr h^k\rangle|
\le 2\varepsilon\|h^k\|_V^2
+C_\varepsilon\bigl(\|q^k\|_{V'}^2+\|g^k\|_{H^{-1/2}(\GN)}^2\bigr).
\]
Since $|\beta^k(x,0)|\le b_0^k(x)$, Jensen's inequality gives
\[
\tau\sum_{k=1}^N\|\beta^k(\cdot,0)\|_{L^2(\GR)}^2
\le \|b_0\|_{L^2(\Sigma_R)}^2.
\]
Taking $\varepsilon$ small, multiplying by $\tau$, and summing in $k$ yields
\begin{equation}\label{eq:discenergy}
\sup_{1\le k\le N}\|h^k\|_{L^2(\Om)}^2
+\tau\sum_{k=1}^N\|h^k\|_V^2
\le C,
\end{equation}
where $C$ is independent of $N$ and $\tau$. By the trace theorem,
\begin{equation}\label{eq:disctrace}
\tau\sum_{k=1}^N\|h^k\|_{L^2(\GR)}^2\le C.
\end{equation}

\subsection{Estimate of the discrete nonlinear time derivative}
From \eqref{eq:discrete}, for every $v\in V$,
\begin{align*}
\left|\left\langle\frac{B(\cdot,h^k)-B(\cdot,h^{k-1})}{\tau},v\right\rangle\right|
&\le T_1\|h^k\|_V\|v\|_V
+\|\beta^k(\cdot,h^k)\|_{L^2(\GR)}\|v\|_{L^2(\GR)}\\
&\quad+\|q^k\|_{V'}\|v\|_V
+C\|g^k\|_{H^{-1/2}(\GN)}\|v\|_V.
\end{align*}
By \eqref{eq:betagrowth}, \eqref{eq:discenergy}, \eqref{eq:disctrace}, and the trace theorem,
\begin{equation}\label{eq:disctime}
\tau\sum_{k=1}^N
\left\|\frac{B(\cdot,h^k)-B(\cdot,h^{k-1})}{\tau}\right\|_{V'}^2
\le C.
\end{equation}

\subsection{Interpolants and compactness}
Define the piecewise constant interpolants
\[
\bar h_\tau(t)=h^k,
\qquad
\bar z_\tau(t)=B(\cdot,h^k),
\qquad t\in I_k,
\]
and the piecewise affine interpolation
\[
\widehat z_\tau(t)
=B(\cdot,h^{k-1})
+\frac{t-t_{k-1}}{\tau}
\bigl(B(\cdot,h^k)-B(\cdot,h^{k-1})\bigr),
\qquad t\in I_k.
\]
Then
\[
\partial_t\widehat z_\tau
=\frac{B(\cdot,h^k)-B(\cdot,h^{k-1})}{\tau}
\quad\text{on }I_k,
\]
so \eqref{eq:disctime} gives
\begin{equation}\label{eq:ztime}
\|\partial_t\widehat z_\tau\|_{L^2(0,T;V')}\le C.
\end{equation}

We next use the spatial chain-rule assumption \eqref{eq:Bchain}. Since
\[
\nabla B(x,h^k)
=S(x,h^k)\nabla h^k+\nabla_xB(x,h^k),
\]
relations \eqref{eq:coeff}, \eqref{eq:Bx}, and \eqref{eq:discenergy} imply
\begin{equation}\label{eq:zH1}
\tau\sum_{k=1}^N\|B(\cdot,h^k)\|_{H^1(\Om)}^2\le C.
\end{equation}
Thus $\bar z_\tau$ is bounded in $L^2(0,T;H^1(\Om))$. Notice that no $H^1$ regularity is required of
$z^0=B(\cdot,h_0)$; only $z^0\in L^2(\Om)$ is used.

\begin{lemma}[Discrete compactness]\label{lem:disccompact}
Let $z^0\in L^2(\Om)$ and $z^k\in H^1(\Om)$ for $k\ge1$. Suppose
\[
\tau\sum_{k=1}^N\|z^k\|_{H^1(\Om)}^2\le C,
\qquad
\tau\sum_{k=1}^N\left\|\frac{z^k-z^{k-1}}{\tau}\right\|_{V'}^2\le C.
\]
Then the piecewise constant interpolation $\bar z_\tau$ is relatively compact in $L^2(0,T;L^2(\Om))$.
\end{lemma}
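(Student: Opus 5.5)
We will prove the lemma with the discrete Aubin--Lions argument: the Kolmogorov--Riesz/Simon time-translation criterion combined with an Ehrling-type interpolation for the compact embedding $H^1(\Om)\Subset L^2(\Om)$. The triple in play is
\[
H^1(\Om)\Subset L^2(\Om)\hookrightarrow V'.
\]
The embedding $L^2(\Om)\hookrightarrow V'$ is continuous and injective, because $V$ is dense in $L^2(\Om)$: it contains $C_c^\infty(\Om)$. Ehrling's lemma therefore gives, for every $\varepsilon>0$, a constant $C_\varepsilon$ such that
\[
\|w\|_{L^2(\Om)}\le\varepsilon\|w\|_{H^1(\Om)}+C_\varepsilon\|w\|_{V'}\qquad\text{for all } w\in H^1(\Om).
\]
By Simon's theorem, relative compactness of $\{\bar z_\tau\}$ in $L^2(0,T;L^2(\Om))$ follows from two facts: the family is bounded in $L^2(0,T;H^1(\Om))$, which is the first hypothesis, and
\[
\sup_\tau\int_0^{T-s}\|\bar z_\tau(t+s)-\bar z_\tau(t)\|_{L^2(\Om)}^2\,dt\to0\quad\text{as } s\to0.
\]
Applying the Ehrling inequality pointwise in $t$ reduces the second fact to showing that the same translation integral in the $V'$ norm is at most $C s$, up to an error controlled by $\varepsilon$ times the $H^1$ bound.

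\textbf{Step 1: time translates in $V'$.} Fix $s\in(0,T)$ and $t\in(0,T-s)$, with $t\in I_k$ and $t+s\in I_l$, $l\ge k$. Writing the difference as a telescoping sum,
\[
\bar z_\tau(t+s)-\bar z_\tau(t)=z^l-z^k=\sum_{j=k+1}^{l}\tau\,\delta z^j,\qquad \delta z^j:=\frac{z^j-z^{j-1}}{\tau}.
\]
Let $\widehat z_\tau$ be the piecewise affine interpolant, so that $\partial_t\widehat z_\tau=\delta z^j$ on $I_j$. The telescoping sum equals $\int_{t_k}^{t_l}\partial_t\widehat z_\tau\,dr$. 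Since $t_l-t_k\le s+\tau$, Cauchy--Schwarz gives
\[
\|z^l-z^k\|_{V'}^2\le (s+\tau)\int_{t}^{t+s+\tau}\|\partial_t\widehat z_\tau\|_{V'}^2\,dr,
\]
where $\partial_t\widehat z_\tau$ is extended by zero beyond $T$. We then integrate in $t$ and apply Fubini, which yields
\[
\int_0^{T-s}\|\bar z_\tau(t+s)-\bar z_\tau(t)\|_{V'}^2\,dt\le C(s+\tau)^2.
\]

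\textbf{Step 2: removing $\tau$ from the bound (main obstacle).} For $s\ge\tau$ the factor $(s+\tau)^2\le 4s^2$ is harmless. The difficulty is the regime $s<\tau$. There we observe that $\bar z_\tau(t+s)\neq\bar z_\tau(t)$ only when $t$ lies within distance $s$ of a grid point $t_j$, i.e.\ on a set of measure at most $Ns$, and then the difference is exactly $z^{j}-z^{j-1}$. The translation integral is therefore bounded by
\[
s\sum_j\|z^j-z^{j-1}\|_{V'}^2=s\,\tau\cdot\tau\sum_j\|\delta z^j\|_{V'}^2\le C s\tau\le Cs.
\]
Both regimes together give a bound $Cs$ uniform in $\tau$.

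This step also requires care at the initial time, because $z^0$ lies only in $L^2(\Om)$ and not in $H^1(\Om)$. This causes no trouble. The time-difference estimate is a $V'$ statement, and $z^0\in L^2\subset V'$, so the step $j=1$ is included in Step 1. The piecewise constant interpolant $\bar z_\tau$ uses only $z^k$ with $k\ge1$ on $(0,T]$, so the $H^1$ bound in the Ehrling step is never applied to $z^0$.

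\textbf{Step 3: conclusion.} We combine the Ehrling inequality with Steps 1--2:
\[
\int_0^{T-s}\|\bar z_\tau(t+s)-\bar z_\tau(t)\|_{L^2}^2\,dt\le 4\varepsilon^2 C+2C_\varepsilon^2 C s.
\]
Letting first $s\to0$ and then $\varepsilon\to0$ gives the uniform translation estimate. Together with the $L^2(0,T;H^1(\Om))$ bound, Simon's compactness theorem (the $p=2$ case of the Aubin--Lions--Simon criterion) yields relative compactness of $\{\bar z_\tau\}$ in $L^2(0,T;L^2(\Om))$.
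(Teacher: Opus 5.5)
Your proposal is correct and follows essentially the same route as the paper: a telescoping/Cauchy--Schwarz estimate of the time translates in $V'$, a separate single-jump treatment of shifts smaller than one grid cell, the Ehrling inequality for $H^1(\Om)\Subset L^2(\Om)\hookrightarrow V'$, and Simon's criterion. The only difference is cosmetic. Your case split $s\ge\tau$ versus $s<\tau$ gives a translate bound $Cs$ that is uniform in $\tau$, whereas the paper obtains $C(s+\tau)^2$ and passes through $\limsup_{\tau\downarrow0}$, so your version feeds into Simon's uniform-translation hypothesis slightly more directly.
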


\begin{proof}
Set
\[
d^j:=\frac{z^j-z^{j-1}}{\tau}.
\]
For an integer $\ell\ge1$ and $s=\ell\tau$,
\[
z^{k+\ell}-z^k=\tau\sum_{j=k+1}^{k+\ell}d^j.
\]
By Cauchy--Schwarz,
\[
\|z^{k+\ell}-z^k\|_{V'}^2
\le \ell\tau^2\sum_{j=k+1}^{k+\ell}\|d^j\|_{V'}^2
=s\tau\sum_{j=k+1}^{k+\ell}\|d^j\|_{V'}^2.
\]
Multiplying by $\tau$ and summing over $k=0,\dots,N-\ell-1$, each index $j$
is counted at most $\ell$ times. Hence
\begin{align}
\int_0^{T-s}\|\bar z_\tau(t+s)-\bar z_\tau(t)\|_{V'}^2\,dt
&\le s\tau^2\ell\sum_{j=1}^N\|d^j\|_{V'}^2\notag\\
&=s^2\left(\tau\sum_{j=1}^N\|d^j\|_{V'}^2\right)
\le Cs^2.
\label{eq:Vprime-trans}
\end{align}
Thus the exponent in \eqref{eq:Vprime-trans} follows directly from the
$L^2$-bound on the discrete time derivative.

Now let $s>0$ be arbitrary and write
\[
s=\ell\tau+r,\qquad \ell\in\mathbb N_0,\quad 0\le r<\tau.
\]
By the triangle inequality,
\begin{align*}
\|\bar z_\tau(\cdot+s)-\bar z_\tau(\cdot)\|_{L^2(0,T-s;V')}
&\le
\|\bar z_\tau(\cdot+s)-\bar z_\tau(\cdot+\ell\tau)\|_{L^2(0,T-s;V')}\\
&\quad+
\|\bar z_\tau(\cdot+\ell\tau)-\bar z_\tau(\cdot)\|_{L^2(0,T-s;V')}.
\end{align*}
The second term is bounded by $C\ell\tau\le Cs$ by
\eqref{eq:Vprime-trans}. For the first term, since $0\le r<\tau$, the two
piecewise-constant functions differ only on subintervals of length $r$ adjacent
to the grid points, and on each such subinterval their difference equals one discrete jump. Therefore
\begin{align*}
\int_0^{T-s}
\|\bar z_\tau(t+s)-\bar z_\tau(t+\ell\tau)\|_{V'}^2\,dt
&\le r\sum_{j=1}^N\|z^j-z^{j-1}\|_{V'}^2\\
&=r\tau^2\sum_{j=1}^N\|d^j\|_{V'}^2\\
&\le Cr\tau\le C\tau^2.
\end{align*}
Consequently,
\begin{equation}\label{eq:Vprime-trans-general}
\int_0^{T-s}\|\bar z_\tau(t+s)-\bar z_\tau(t)\|_{V'}^2\,dt
\le C(s+\tau)^2.
\end{equation}

On the other hand, by the assumed $L^2(0,T;H^1(\Om))$ bound,
\[
\int_0^{T-s}
\|\bar z_\tau(t+s)-\bar z_\tau(t)\|_{H^1(\Om)}^2\,dt\le C.
\]
Since
\[
H^1(\Om)\Subset L^2(\Om)\hookrightarrow V',
\]
Ehrling's inequality gives, for every $\varepsilon>0$,
\[
\|w\|_{L^2(\Om)}^2
\le \varepsilon\|w\|_{H^1(\Om)}^2
+C_\varepsilon\|w\|_{V'}^2.
\]
Applying this inequality to
$w=\bar z_\tau(t+s)-\bar z_\tau(t)$, integrating in time, and using
\eqref{eq:Vprime-trans-general}, we find
\[
\limsup_{\tau\downarrow0}
\int_0^{T-s}\|\bar z_\tau(t+s)-\bar z_\tau(t)\|_{L^2(\Om)}^2\,dt
\le C\varepsilon+C_\varepsilon Cs^2.
\]
First let $s\downarrow0$ and then $\varepsilon\downarrow0$. It follows that
\[
\lim_{s\downarrow0}\,\limsup_{\tau\downarrow0}
\int_0^{T-s}\|\bar z_\tau(t+s)-\bar z_\tau(t)\|_{L^2(\Om)}^2\,dt=0.
\]
Together with the uniform $L^2(0,T;H^1(\Om))$ bound, the compactness criterion of
Simon \cite{SimonCompactness} yields relative compactness of $\bar z_\tau$ in
$L^2(0,T;L^2(\Om))$.
\end{proof}

Applying Lemma~\ref{lem:disccompact} with $z^k=B(\cdot,h^k)$ and using \eqref{eq:zH1} and \eqref{eq:disctime}, we obtain, after extraction,
\begin{equation}\label{eq:zstrong}
\bar z_\tau=B(\cdot,\bar h_\tau)\to z
\quad\text{strongly in }L^2(\QT).
\end{equation}
Moreover, from the definition of the affine interpolant and \eqref{eq:disctime},
\begin{equation}\label{eq:zhat-zbar}
\|\widehat z_\tau-\bar z_\tau\|_{L^2(0,T;V')}\le C\tau,
\end{equation}
so $\widehat z_\tau\to z$ strongly in $L^2(0,T;V')$.

Since $B(x,\cdot)$ is uniformly invertible and its inverse is $S_0^{-1}$-Lipschitz, there exists a measurable function $h$ defined by
\[
h(x,t)=B(x,\cdot)^{-1}(z(x,t)),
\]
and \eqref{eq:zstrong} implies
\begin{equation}\label{eq:hstrong}
\bar h_\tau\to h
\quad\text{strongly in }L^2(\QT).
\end{equation}
Moreover, \eqref{eq:discenergy} yields, after extraction,
\begin{align}
\bar h_\tau&\rightharpoonup h &&\text{in }L^2(0,T;V),\label{eq:hweakV}\\
\bar h_\tau&\stackrel{*}{\rightharpoonup}h &&\text{in }L^\infty(0,T;L^2(\Om)).\label{eq:hweakstar}
\end{align}
By \eqref{eq:zH1}, $\bar z_\tau$ is also weakly compact in $L^2(0,T;H^1(\Om))$; hence, using \eqref{eq:zstrong},
\[
B(\cdot,h)=z\in L^2(0,T;H^1(\Om)).
\]
Finally, by \eqref{eq:ztime}, after extraction there exists $\xi\in L^2(0,T;V')$ such that
\[
\partial_t\widehat z_\tau\rightharpoonup\xi
\quad\text{in }L^2(0,T;V').
\]
On the other hand, \eqref{eq:zhat-zbar} and \eqref{eq:zstrong} imply
$\widehat z_\tau\to B(\cdot,h)$ strongly in $L^2(0,T;V')$. Hence, for every
$v\in V$ and $\varphi\in C_c^\infty(0,T)$,
\begin{align*}
\int_0^T\langle\xi,v\rangle\varphi(t)\,dt
&=\lim_{\tau\to0}\int_0^T\langle\partial_t\widehat z_\tau,v\rangle\varphi(t)\,dt\\
&=-\lim_{\tau\to0}\int_0^T(\widehat z_\tau,v)_{L^2}\varphi'(t)\,dt\\
&=-\int_0^T(B(\cdot,h),v)_{L^2}\varphi'(t)\,dt.
\end{align*}
Therefore $\xi=\partial_tB(\cdot,h)$ in the sense of distributions, and consequently
\begin{equation}\label{eq:zdtweak}
\partial_t\widehat z_\tau\rightharpoonup \partial_tB(\cdot,h)
\quad\text{in }L^2(0,T;V').
\end{equation}

\subsection{Strong compactness of traces}
For bounded Lipschitz domains,
\[
\|v\|_{L^2(\partial\Om)}^2
\le C\|v\|_{L^2(\Om)}\|v\|_{H^1(\Om)}.
\]
Apply this to $v=\bar h_\tau-h$, integrate in time, and use \eqref{eq:hstrong} together with the uniform $L^2(0,T;H^1)$ bound. Then
\begin{equation}\label{eq:tracecompact}
\bar h_\tau\to h
\quad\text{strongly in }L^2(0,T;L^2(\partial\Om)),
\end{equation}
and in particular on $\Sigma_R$.

\subsection{Passage to the limit}
Let $\bar q_\tau,\bar g_\tau$ denote the piecewise constant time averages of the data, and let
\[
\bar\beta_\tau(x,t,r)=\beta^k(x,r)\qquad (t\in I_k).
\]
The discrete equation may be written for a.e. $t\in(0,T)$ as
\begin{align}
\langle\partial_t\widehat z_\tau,v\rangle
&+\int_\Om T(x,\bar h_\tau)\nabla\bar h_\tau\cdot\nabla v\,dx
+\int_{\GR}\bar\beta_\tau(x,t,\bar h_\tau)v\,dS\notag\\
&=\langle\bar q_\tau,v\rangle-\langle\bar g_\tau,\Tr v\rangle.
\label{eq:interpdisc}
\end{align}

Because $\bar h_\tau\to h$ strongly in $L^2(\QT)$, after a subsequence $\bar h_\tau\to h$ almost everywhere. Continuity and boundedness of $T$ imply
\[
T(x,\bar h_\tau)\to T(x,h)
\quad\text{a.e. in }\QT.
\]
For each fixed $v\in V$,
\[
[T(x,\bar h_\tau)-T(x,h)]\nabla v\to0
\quad\text{strongly in }L^2(\QT)
\]
by dominated convergence. Together with \eqref{eq:hweakV}, this gives
\begin{equation}\label{eq:diffusionlimit}
\int_0^T\!\int_\Om T(x,\bar h_\tau)\nabla\bar h_\tau\cdot\nabla v\,\psi
\to
\int_0^T\!\int_\Om T(x,h)\nabla h\cdot\nabla v\,\psi
\end{equation}
for every bounded time test function $\psi$.

For the boundary term, define the time-averaging operator $P_\tau$ on $L^2(\Sigma_R)$ by
\[
(P_\tau F)(x,t)=\frac1\tau\int_{I_k}F(x,s)\,ds,
\qquad t\in I_k.
\]
Since $\bar h_\tau$ is constant on each $I_k$,
\begin{equation}\label{eq:beta-average-id}
\bar\beta_\tau(x,t,\bar h_\tau(x,t))
=P_\tau\bigl[\beta(\cdot,\cdot,\bar h_\tau)\bigr](x,t).
\end{equation}
By \eqref{eq:tracecompact}, $\bar h_\tau\to h$ strongly in $L^2(\Sigma_R)$. The Carath\'eodory property and the linear growth assumption imply continuity of the associated Nemytskii operator on $L^2(\Sigma_R)$; hence
\[
\beta(x,t,\bar h_\tau)\to\beta(x,t,h)
\quad\text{strongly in }L^2(\Sigma_R).
\]
Because $P_\tau$ is an $L^2$ contraction and $P_\tau F\to F$ strongly in $L^2$ for every fixed $F\in L^2(\Sigma_R)$,
\begin{align*}
\|\bar\beta_\tau(\bar h_\tau)-\beta(h)\|_{L^2(\Sigma_R)}
&\le \|P_\tau[\beta(\bar h_\tau)-\beta(h)]\|_{L^2}
+\|P_\tau[\beta(h)]-\beta(h)\|_{L^2}\\
&\longrightarrow0.
\end{align*}
Therefore
\begin{equation}\label{eq:robinlimit}
\int_0^T\!\int_{\GR}\bar\beta_\tau(x,t,\bar h_\tau)v\psi\,dSdt
\to
\int_0^T\!\int_{\GR}\beta(x,t,h)v\psi\,dSdt.
\end{equation}
The piecewise averages of $q$ and $g$ converge strongly in their respective $L^2$ spaces.

Finally, choose $v\in V$ and $\psi\in C^1([0,T])$ with $\psi(T)=0$, multiply \eqref{eq:interpdisc} by $\psi$, and integrate in time. Integration by parts gives
\begin{align*}
-\int_0^T(\widehat z_\tau,v)\psi'\,dt
&+\int_0^T\!\int_\Om T(x,\bar h_\tau)\nabla\bar h_\tau\cdot\nabla v\,\psi\,dxdt\\
&+\int_0^T\!\int_{\GR}\bar\beta_\tau(x,t,\bar h_\tau)v\psi\,dSdt\\
&=\int_0^T\bigl(\langle\bar q_\tau,v\rangle-\langle\bar g_\tau,\Tr v\rangle\bigr)\psi\,dt
+(B(\cdot,h_0),v)\psi(0).
\end{align*}
Using \eqref{eq:zstrong}, \eqref{eq:zhat-zbar}, \eqref{eq:diffusionlimit}, and \eqref{eq:robinlimit}, we pass to the limit and obtain precisely \eqref{eq:weakint}. This proves Theorem~\ref{thm:existence}.
\qed

\section{Return to the physical Moche coefficients}\label{sec:physical}
The preceding theorem concerns the uniformly positive extensions of the physical constitutive laws. Its relation with the original aquifer problem is immediate on the admissible saturated range.

\begin{corollary}[Consistency with the physical model]\label{cor:physical}
Let $h$ be a weak solution furnished by Theorem~\ref{thm:existence}. Suppose in addition that
\[
h(x,t)\in I=[m,M]
\quad\text{for a.e. }(x,t)\in\QT.
\]
Assume also that, on $I$, the abstract boundary law coincides with the physical one,
$\beta(x,t,r)=\alpha(r)(r-h_{\rm ext}(x,t))$. Assume, in addition, that the abstract source term is the physical one, $q=N$ (viewed as an element of $L^2(0,T;V')$). Then $S(x,h)=S_{\rm ph}(x,h)$ and $T(x,h)=T_{\rm ph}(x,h)$ almost everywhere, and $h$ is a weak solution of the physical Moche model \eqref{eq:physical2} with the same mixed boundary conditions.
\end{corollary}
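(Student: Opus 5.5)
The argument is a substitution: since $S$ and $T$ coincide with $S_{\rm ph},T_{\rm ph}$ on $\Om\times I$, pointwise coincidence at $r=h(x,t)$ holds wherever $h(x,t)\in I$, which is almost everywhere by hypothesis. So $S(x,h)=S_{\rm ph}(x,h)$ and $T(x,h)=T_{\rm ph}(x,h)$ a.e. in $\QT$. Similarly, the hypothesis on $\beta$ gives $\beta(x,t,h)=\alpha(h)(h-h_{\rm ext})$ a.e. on $\Sigma_R$. To apply it there, we need the trace of $h$ to lie in $I$ a.e. on $\Sigma_R$. I would get this from the fact that $I$ is closed and convex: for a.e. $t$, $h(t)\in H^1(\Om)$ with values in $I$. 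The truncation $\max(m,\min(M,h(t)))$ equals $h(t)$, and truncation commutes with the trace. Hence $\Tr h(t)\in I$ a.e. on $\partial\Om$.

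Next I would define what a weak solution of the physical model \eqref{eq:physical2} with (D), (N), (R) means. I take it to be the natural analogue of Definition~\ref{def:weak}: the same regularity for $h$ and $B_{\rm ph}(\cdot,h)$, and identity \eqref{eq:weakint} with $B_{\rm ph}$, $T_{\rm ph}$, the physical Robin law, $q=N$, and $g$. The Dirichlet condition is encoded through $h\in L^2(0,T;V)$. Initial data enter through $B_{\rm ph}(\cdot,h_0)$, and this requires $h_0\in I$ a.e. I would state that requirement explicitly, or note that it is implied if one reads off the initial condition as the $V'$-continuous representative $B(\cdot,h)(0)=B(\cdot,h_0)$ together with the constraint on $h$.

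The key remaining step is the storage term. We must show that $B(x,h)=B_{\rm ph}(x,h)$ as functions, not merely $S=S_{\rm ph}$ at $h$. Since $B(x,r)=\int_0^r S(x,s)\,ds$ integrates over $[0,r]$, agreement on $I$ alone is not enough. It gives $B(x,r)-B_{\rm ph}(x,r)=c(x):=\int_0^m\bigl(S-S_{\rm ph}\bigr)(x,s)\,ds$ for $r\in I$, which is a function of $x$ only. The time-independent shift $c$ drops out of the $\psi'$ term, because $\int_0^T\psi'\,dt=-\psi(0)$. It likewise cancels against the same shift in $B(\cdot,h_0)\psi(0)$, provided $h_0\in I$. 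If $0\in I$, as the paper suggests, then $c\equiv0$ and the step is trivial anyway. This bookkeeping is the step I expect to require care: the cancellation, the role of $h_0\in I$, and the observation that $\partial_tB(\cdot,h)=\partial_tB_{\rm ph}(\cdot,h)$ in $L^2(0,T;V')$. The $H^1$-regularity of $B_{\rm ph}(\cdot,h)$ follows from \eqref{eq:Bchain}, which the paper already verified for $B_{\rm ph}$ via $\eta\in W^{1,\infty}$.

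Substituting all these identities into \eqref{eq:weakint} yields the physical weak formulation term by term. This shows $h$ is a weak solution of the Moche model with the mixed boundary conditions.
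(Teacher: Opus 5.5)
Your proposal is correct and uses the same direct substitution argument the paper has in mind; the paper gives no separate proof and calls the consistency ``immediate''. Your handling of $\Tr h\in I$ on $\GR$, of the possible shift between $B$ and $B_{\rm ph}$, and of $h_0\in I$ fills in points the paper passes over, and in fact your truncation argument applied on $\GD$, where $\Tr h(t)=0$ and $|\GD|>0$, already forces $0\in I$, so $c\equiv0$ and $B(\cdot,h)=B_{\rm ph}(\cdot,h)$ outright.
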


\begin{remark}
The corollary is a consistency statement, not an invariant-region theorem. Establishing that a given set of hydrogeological data forces the solution to remain in a prescribed interval $I$ requires additional comparison or barrier assumptions. Separating these two issues prevents the existence theorem from hiding a circular saturated-regime hypothesis.
\end{remark}

For the Robin law in the original formulation,
\[
\beta(x,t,r)=\alpha(r)(r-h_{\rm ext}(x,t)).
\]
Theorem~\ref{thm:existence} applies whenever this map is nondecreasing in $r$ and satisfies the linear-growth assumption. A particularly transparent case is $\alpha(r)\equiv\alpha_*>0$, for which
\[
\beta(x,t,r)=\alpha_*r-\alpha_*h_{\rm ext}(x,t)
\]
is strongly monotone and \eqref{eq:betagrowth} follows from $h_{\rm ext}\in L^2(\Sigma_R)$.

\section{The degenerate saturated-thickness limit}\label{sec:degenerate}
For the physical transmissivity
\[
T_{\rm ph}(x,h)=K(h-\eta(x)),
\]
uniform ellipticity is lost when $h=\eta$. Introducing
\[
u=h-\eta,
\]
we obtain, whenever $u\ge0$,
\begin{equation}\label{eq:degenerate}
\partial_t\left(n_eu+\frac{S_s}{2}u^2\right)
-\frac K2\Delta(u^2)
-K\operatorname{div}(u\nabla\eta)=N.
\end{equation}
Thus complete local depletion leads to a porous-medium-type diffusion with an additional topographic drift. This is precisely the type of degeneracy for which monotonicity and elliptic--parabolic methods become natural; see \cite{AltLuckhaus,Showalter}.

A regularization may be obtained by replacing $u$ in the mobility by a smooth positive approximation $\theta_\varepsilon(u)\ge\varepsilon$. However, if the source term contains sufficiently strong extraction, preserving the physical constraint $u\ge0$ after complete local depletion is not merely a compactness issue: a compatibility or complementarity condition is needed. The fully depleted problem is therefore closer to a degenerate filtration or free-boundary model and is left for future work.

\section{Concluding remarks}
The hydrogeological geometry of the Moche--CHAVIMOCHIC aquifer leads naturally to a nonlinear two-dimensional parabolic problem with three distinct boundary mechanisms. The analysis above provides a rigorous weak-solvability result for a uniformly saturated extension of the physical model and identifies the precise consistency condition under which that solution solves the original aquifer equations. The implicit Euler--Rothe scheme avoids the unjustified time-derivative estimate that arises in a direct Galerkin treatment and yields compactness through the storage potential $B(x,h)$. Two natural extensions remain: an invariant-region result ensuring preservation of a prescribed saturated interval, and a genuinely degenerate theory allowing the saturated thickness to vanish.

\end{document}